\def\MR#1{\href{http://www.ams.org/mathscinet-getitem?mr=#1}{MR#1}}
\newtheorem{theorem}{Theorem}[section]
\newtheorem{lemma}[theorem]{Lemma}
\theoremstyle{definition}
\newtheorem{definition}[theorem]{Definition}
\newcommand{\R}{\mathbb R}
\newcommand{\N}{\mathbb N}
\newcommand{\medint}{-\kern  -,375cm\int}
\newcommand{\h}{\mathcal{H}^{N-1}}
\newcommand{\hs}{d\mathcal{H}^{N-1}}
\title[An isoperimetric result for the fundamental frequency]
      {An isoperimetric result for the fundamental frequency via domain derivative.}
\author[Carlo Nitsch]{ }
\subjclass{Primary: 35P15; Secondary: 49R05, 35J25}
\keywords{Domain derivative; first laplacian eigenvalue; isoperimetric deficit;}
 \email{c.nitsch@unina.it}
\begin{document}
\maketitle

\centerline{\scshape \href{http://wpage.unina.it/c.nitsch}{Carlo Nitsch}}
\medskip
{\footnotesize
  \centerline{Dipartimento di Matematica e Applicazioni ``R. Caccioppoli"}
   \centerline{Complesso Monte S. Angelo}
   \centerline{Via Cintia, 80126 Napoli, Italy}
}

\bigskip


\begin{abstract}
The Faber-Krahn deficit $\delta\lambda$ of an open bounded set $\Omega$ is the normalized gap between the values that the first Dirichlet Laplacian eigenvalue achieves on $\Omega$ and on the ball having same measure as $\Omega$. For any given family of open bounded sets of $\R^N$ ($N\ge 2$) smoothly converging to a ball, it is well known that both $\delta\lambda$ and the isoperimetric deficit $\delta P$ are vanishing quantities. It is known as well that, at least for convex sets, the ratio $\frac{\delta P}{\delta \lambda}$ is bounded by below by some positive constant (see \cite{BNT,PW}), and in this note, using the technique of the shape derivative, we provide the explicit optimal lower bound of such a ratio as 
$\delta P$ goes to zero.
\end{abstract}

\section{Introduction}

Given an open bounded set of $\R^N$ its first Dirichlet Laplacian eigenvalue $\lambda$ is the least positive number for which the boundary problem
$$\left\{
\begin{array}{ll}
-\Delta u = \lambda u & \mbox{in }\Omega\\
u=0 & \mbox{on }\partial\Omega
\end{array}
\right.$$
admits nontrivial solutions.

The Faber-Krahn inequality is a remarkable property which, answering to a conjecture formulated by Lord Rayleight, states that among sets of given measure the ball has the least first Dirichlet Laplacian eigenvalue. Namely if $\Omega^\sharp$ denotes the ball having the same measure as $\Omega$ then
\begin{equation}\label{eq_FK}
\lambda(\Omega)\ge\lambda(\Omega^\sharp).
\end{equation}

Inequality \eqref{eq_FK} falls in the large class of so-called isoperimetric inequalities. By antonomasia the isoperimetric inequality is the one which characterizes the ball as the set having minimial perimeter among those sets of fixed volume, but nowadays, in a broad sense isoperimetric inequality is an inequality where a functional is optimized under some geometrical prescription. The study of isoperimetric inequalities goes back to the beginning of mathematics and has always been a flourishing field. Recently many authors turned the attention to the study of quantitative versions of the classical isoperimetric inequalities (see for instance \cite{AFN,CL1,CL2,EFT,FiMP,Fu,FuMP1}), and quantitative versions of Faber-Krahn inequality have been investigated for instance in \cite{FuMP2,Mel}. Here we are interested in a recent result \cite{BNT} obtained in the wake of a celebrated paper by L. E. Payne and H. F. Weinberger \cite{PW}.

As custom let us denote by  
$$\delta P(\Omega)=\frac{Per(\Omega)}{Per(\Omega^\sharp)}-1,$$
the isoperimetric deficit of $\Omega$ and
following \cite{FuMP2} we denote by
$$\delta \lambda(\Omega)=\frac{\lambda(\Omega)}{\lambda(\Omega^\sharp)}-1,$$
the Faber-Krahn deficit.

The classical isoperimetric inequality and the Faber-Krahn inequality respectively infer that both $\delta P$ and $\delta \lambda$ are always non negative quantities.

When $\Omega$ is convex then \eqref{eq_FK} can be improved (see \cite{BNT,PW}) establishing that for any $\eta>0$ there exists $C>0$ depending on $N$ such that if $\delta P(\Omega)\le \eta$ then
\begin{equation}\label{eq_conv}
\delta P(\Omega)\ge C\,\delta  \lambda(\Omega).
\end{equation}

The name ``quantitative Faber-Krahn inequality'' comes from the fact that it quantifies how  ``small" is the Faber-Krahn deficit when the set $\Omega$ is ``close'' to the ball having same measure.

It is easy to show that inequality \eqref{eq_conv} is optimal in the sense that for any $\eta,C,\gamma>0$ there exists a bounded convex set $\Omega$ such that $\delta P(\Omega)\le \eta$ and 
$\delta P(\Omega)< C\,\delta  \lambda(\Omega)^{1+\gamma}.$

Very little is known about the optimal value of the constant $C$ even in the limit as $\delta P\to 0$ and here comes the idea to exploit the technique of shape derivative to investigate the behavior of the ratio $\delta P/\delta \lambda$ along an arbitrary family of sets which converges in a suitable way to a ball. More precisely we use the following definition
\begin{definition}\label{def_family}
We say that a one parameter family $\Omega(t)$ of open bounded sets of $\R^N$ smoothly converges to an open bounded set $\Omega$ as $t$ goes to zero, if there exists a positive $\delta$, and a one parameter family of transformations $\Phi_t$ ($0\le t<\delta$) of $\R^N$ in itself such that
\begin{enumerate}[(a)]
\item $\Phi_t$ and ${\Phi_t}^{-1}$ belong to $C^\infty(\R^N;\R^N)$ for all $0\le t< \delta$;
\item the mappings $t\to \Phi_t (x)$ and $t \to {\Phi_t}^{-1} (x)$ belong to $C^\infty([0,\delta[)$ for all $x\in \R^N$;
\item $\Omega(0)=\Omega$ and $\Omega(t)=\Phi_t(\Omega)$ for all $0\le t<\delta$;
\end{enumerate}
\end{definition}

We denote by $\omega_N$ the volume of the unit ball of $\R^N$, by $J_\nu$ the Bessel function of first kind and order $\nu$, and by $j_{\nu}$ the first positive zero of $J_\nu$. Our main result follows.
\begin{theorem}\label{mainteo}
For any $N\ge 2$ there exists a dimensional constant 
$$C_N=\frac{\displaystyle N(N+1)\int_0^{j_{N/2-1}}rJ^2_{N/2-1}(r)\,dr}{\displaystyle2\left({j_{N/2-1}J_{N/2-1}'}(j_{N/2-1})\right)^2\left(j^2_{N/2-1}-N\right) }$$
such that for any given one parameter family of sets $\Omega(t)$ of $\R^N$, smoothly converging to a ball as $t\to 0$,  then
$$\liminf_{t\to 0} \frac{\delta P(\Omega(t))}{\delta \lambda(\Omega(t))}\ge C_N.$$
The constant $C_N$ is optimal and there exists a family $\Omega(t)$ for which the equality sign is achieved.
\end{theorem}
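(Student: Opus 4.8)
\emph{Overview.} The plan is to use the scaling invariance of the two deficits to normalize the limiting ball, to read off the leading $t^{2}$-behaviour of $\delta P$ and $\delta\lambda$ from the second shape derivatives of the perimeter and of $\lambda_{1}$ at the ball, to diagonalize the two resulting quadratic forms in spherical harmonics, and finally to compare them mode by mode. Concretely: since $\delta P$ and $\delta\lambda$ are invariant under homotheties and isometries, I may assume $\Omega(0)=B_{1}$ and, composing each $\Phi_{t}$ with the homothety restoring the volume (which preserves all the requirements of Definition~\ref{def_family}), that $|\Omega(t)|=\omega_{N}$ for every $t$; then $\Omega(t)^{\sharp}=B_{1}$, $\lambda(\Omega(t)^{\sharp})=j_{N/2-1}^{2}=:\lambda_{1}$, and the perimeter of $\Omega(t)^{\sharp}$ equals $N\omega_{N}$. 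Only the normal velocity matters: put $V=\partial_{t}\Phi_{t}|_{t=0}$ and $\phi=V\cdot\nu$ on $\partial B_{1}=\Ss^{N-1}$; the volume constraint forces $\int_{\Ss^{N-1}}\phi=0$, so in the spherical-harmonic decomposition $\phi=\sum_{k\ge1}\phi_{k}$ the degree-$0$ part is absent, and the degree-$1$ part is an infinitesimal translation which affects neither the perimeter nor $\lambda$, so we may take $\phi=\sum_{k\ge2}\phi_{k}$. (If $V\cdot\nu\equiv0$ one repeats the argument with the first non-trivial jet of $\Phi_{t}$; if $\Omega(t)\equiv B_{1}$ there is nothing to prove.)

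\emph{Second-order expansions.} At $B_{1}$ the first shape derivatives of the perimeter and of $\lambda_{1}$ along volume-preserving deformations vanish, because the ball is a critical point for both functionals under the volume constraint (isoperimetric and Faber--Krahn inequalities). Hence
$$\delta P(\Omega(t))=\frac{t^{2}}{2N\omega_{N}}\,\mathcal P[\phi]+o(t^{2}),\qquad \delta\lambda(\Omega(t))=\frac{t^{2}}{2\lambda_{1}}\,\mathcal E[\phi]+o(t^{2}),$$
where $\mathcal P$ and $\mathcal E$ are the volume-constrained second variations at $B_{1}$. The first expansion is the classical second variation of area over a sphere. For $\mathcal E$ I use that $\lambda_{1}(\Omega(t))$ is simple and depends smoothly on $t$, so that $\mathcal E[\phi]$ is the $t^{2}$-coefficient of the Rayleigh quotient of the quasimode $u_{1}+t\,u_{1}'$ transported to $\Omega(t)$, where $u_{1}(x)=|x|^{1-N/2}J_{N/2-1}(j_{N/2-1}|x|)$ (normalized in $L^{2}$) and the shape derivative $u_{1}'$ solves $-\Delta u_{1}'=\lambda_{1}u_{1}'$ in $B_{1}$, $u_{1}'=-\phi\,\partial_{\nu}u_{1}$ on $\partial B_{1}$ (the term $\lambda_{1}'u_{1}$ is absent since $\lambda_{1}'=0$). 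Therefore
$$\lim_{t\to0}\frac{\delta P(\Omega(t))}{\delta\lambda(\Omega(t))}=\frac{\lambda_{1}}{N\omega_{N}}\cdot\frac{\mathcal P[\phi]}{\mathcal E[\phi]}$$
as soon as $\phi$ has a non-zero component in some mode $k\ge2$ (otherwise both sides vanish to this order and one recurses with the next jet).

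\emph{Diagonalization, the constant, optimality.} Both forms diagonalize over spherical harmonics: $\mathcal P[\phi]=\sum_{k\ge2}(k-1)(k+N-1)\,\|\phi_{k}\|_{L^{2}(\Ss^{N-1})}^{2}$, while, writing the degree-$k$ part of $u_{1}'$ as $w_{k}(r)Y_{k}(\theta)$, the radial profile $w_{k}$ solves a Bessel-type ODE whose solution bounded at the origin is proportional to $r^{1-N/2}J_{k+N/2-1}(j_{N/2-1}r)$, the boundary condition fixing the proportionality constant (and $J_{k+N/2-1}(j_{N/2-1})\neq0$ because $j_{N/2-1}<j_{k+N/2-1}$). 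Substituting this into the second-variation identity for $\lambda_{1}$ and evaluating the integrals $\int_{0}^{1}r^{N-1}w_{k}u_{1}\,dr$ and $\int_{0}^{1}r^{N-1}|w_{k}|^{2}\,dr$ by Lommel's formulas gives $\mathcal E[\phi]=\sum_{k\ge2}\beta_{k}\|\phi_{k}\|_{L^{2}(\Ss^{N-1})}^{2}$ with $\beta_{k}>0$; thus $\mathcal P[\phi]/\mathcal E[\phi]\ge\min_{k\ge2}(k-1)(k+N-1)/\beta_{k}$, and one checks that this minimum is attained at $k=2$. Using the Rellich identity $j_{N/2-1}^{2}(J_{N/2-1}'(j_{N/2-1}))^{2}=2\int_{0}^{j_{N/2-1}}rJ_{N/2-1}^{2}(r)\,dr$, the resulting bound $\frac{\lambda_{1}}{N\omega_{N}}\cdot\frac{N+1}{\beta_{2}}$ is exactly the stated $C_{N}$ (equivalently $C_{N}=N(N+1)/(4(j_{N/2-1}^{2}-N))$, the factor $j_{N/2-1}^{2}-N$ arising from the $k=2$ mode of $u_{1}'$). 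For optimality one takes $\Omega(t)$ to be the ellipsoid with semiaxes $1+ta_{1},\dots,1+ta_{N}$, $\sum_{i}a_{i}=0$, rescaled to volume $\omega_{N}$: then $\phi$ is a pure degree-$2$ harmonic and the $\liminf$ equals $C_{N}$.

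\emph{Main difficulty.} The crux is the computation of $\mathcal E$: one must write down the correct second-variation formula for a Dirichlet eigenvalue at a critical domain (with its mean-curvature term and the contribution of the second-order normal displacement pinned down by the volume constraint), solve the shape-derivative ODE explicitly in terms of $J_{k+N/2-1}$, carry out the ensuing Bessel integrals, and then prove that $k\mapsto(k-1)(k+N-1)/\beta_{k}$ is minimized at $k=2$; the remaining steps are routine domain-derivative bookkeeping.
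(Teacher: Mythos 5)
Your proposal follows essentially the same strategy as the paper: normalize to volume $\omega_N$ so that $\Omega^\sharp = B_1$; observe that first variations of perimeter and $\lambda_1$ vanish at the ball under the volume constraint, so both deficits are $O(t^2)$; expand to second order; diagonalize both quadratic forms over spherical harmonics using the explicit solution $w_k(r)\propto r^{1-N/2}J_{k+N/2-1}(j_{N/2-1}r)$ of the shape-derivative equation; compare the two forms mode by mode; and exhibit ellipsoids for sharpness. Your simplified form of the constant, $C_N = N(N+1)/\bigl(4(j_{N/2-1}^2-N)\bigr)$, is consistent with the stated $C_N$ via the Lommel identity $2\int_0^{j_{N/2-1}} r J_{N/2-1}^2(r)\,dr = j_{N/2-1}^2\bigl(J_{N/2-1}'(j_{N/2-1})\bigr)^2$, and the ellipsoid construction (with $\sum a_i=0$ giving a pure degree-2 harmonic velocity) matches the paper's sharpness example.

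The one substantive gap is the assertion that the mode-by-mode ratio $(k-1)(k+N-1)/\beta_k$ is minimized at $k=2$, which you flag but do not prove. This is the content of the paper's Lemma \ref{lemma_minimal}, and it is not routine: after expressing the ratio as $\mathcal Q_k$ with denominator $\ell_k + N/2 - j_{N/2-1}\,J_{\ell_k+1}(j_{N/2-1})/J_{\ell_k}(j_{N/2-1})$, the paper shows $\{\mathcal Q_k\}_{k\ge 2}$ is convex by proving the sequence of Bessel-function ratios $J_{\ell_k+1}(j_{N/2-1})/J_{\ell_k}(j_{N/2-1})$ is positive, decreasing, vanishing and convex via the continued-fraction representation, then reduces monotonicity to the explicit inequality $\mathcal Q_2<\mathcal Q_3$, which in turn reduces to $(z_N^2-2)N^2+5z_N^2 N-2z_N^4>0$ with $z_N=j_{N/2-1}$; this is verified by bounds on $z_N$ for $N\ge 10$ and by a numerical table for $2\le N\le 9$. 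Without some such argument the lower bound over all modes $k\ge 2$ is not established, so this step needs to be filled in. A secondary (and more minor) point: your computation of $\mathcal E[\phi]$ via the Rayleigh quotient of the transported quasimode $u_1+t u_1'$ requires care because the test function must vanish on the moving boundary $\partial\Omega(t)$; the paper avoids this by differentiating the level-set identity $u(\Phi_t(y),t)=0$ twice, which directly produces the boundary value of $\partial_t^2 u$ and hence the mean-curvature and acceleration terms in $\lambda''(0)$.
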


\section{Proof of Theorem \ref{mainteo}}
We consider a family of open bounded sets $\Omega(t)$ ($0\le t<\delta$) smoothly converging as $t \to 0$ in the sense of Definition \ref{def_family} to a smooth open bounded connected set $\Omega$. For any $0\le t<\delta$ we denote by $\lambda(t)$ and $u(x,t)$ respectively the first Dirichlet Laplacian eigenvalue and the normalized solution to
\begin{equation}\label{sistema_t}
\left\{
\begin{array}{lll}
-\Delta u(x,t)=\lambda(t) u(x,t)  &\mathrm{in}  &\Omega(t)  \\\\
u \ge 0 &\mathrm{in} & \Omega(t)\\\\
u(x,t)=0  & \mathrm{on} & \partial\Omega(t) \\\\
\|u\|_{L^2(\Omega(t))}=1.
\end{array}
\right.
\end{equation}

The proof of Theorem \ref{mainteo} will be carried on by choosing an arbitrary family of smooth sets converging to a ball as $t$ goes to zero and by performing Taylor expansion of the ratio $\frac{\delta P(\Omega(t))}{\delta \lambda(\Omega(t))}$ around $t=0$. 
For the seek of simplicity we split the proof of Theorem \ref{mainteo} in several steps. In the first step we provide the general expression of first and second order derivatives of the first Dirichlet Laplacian eigenvalue along the family $\Omega(t)$. In the second step we let $\Omega$ be the a ball of $\R^N$ and we differentiate $\delta P(\Omega(t))$ and
$\delta \lambda(\Omega(t))$ twice at $t=0$ deducing that there exists a functional $\mathcal{F}$ on $C^\infty(\partial\Omega)$ such that $\dfrac{\delta P(\Omega(t))}{\delta \lambda(\Omega(t))}=\mathcal{F}\left(n\cdot \left.\dfrac{\partial\Phi_t}{\partial t}\right|_{t=0}\right)+o(1)$ as $t$ goes to zero. Here $n$ denotes the unit outer normal to $\partial \Omega$. In the third and last step we show that $C_N$ is exactly the minimum achieved by the functional $\mathcal{F}$ when we vary $\Phi_t$ on the whole class of admissible smooth transformations (in the sense of Definition \ref{def_family} (a)-(b)-(c)).

\emph{Step 1.} We begin the proof computing the first and the second order derivatives of the Dirichlet Laplacian eigenvalue along the family $\Omega(t)$, using the well known Hadamard's formula. Namely we prove the following Lemma
\begin{lemma}\label{lem_seconda}
 For $0\le t<\delta$ let $u(x,t)$ be the family of solutions to \eqref{sistema_t} and let $\lambda(t)$ be the corresponding family of eigenvalues. There exists $\varepsilon>0$ such that for all $0\le t<\varepsilon$ the family $\lambda(t)$ is smooth and it holds
\begin{align}\label{eq_firstvar}
\lambda'(t)=&\int_{\partial\Omega(t)}|Du| \frac{\partial \Phi_t}{\partial t}(\Phi_t^{-1})\cdot Du \,d\h,\\ \label{eq_secondvar}
\lambda''(t)=&\int_{\partial\Omega(t)}|Du| \left(\left[\frac{\partial\Phi_t}{\partial t}(\Phi_t^{-1})\right]^T\cdot D^2 u \cdot\frac{\partial\Phi_t}{\partial t}(\Phi_t^{-1})\right.\\\notag
&\left.+Du \cdot\frac{\partial^2\Phi_{t}}{\partial t^2}(\Phi_t^{-1})+2Dw\cdot\frac{\partial\Phi_t}{\partial t}(\Phi_t^{-1})\right) \,d\h\\ \notag
\end{align}
where $w$ solves
\begin{equation}\label{eq_velocity}
\left\{
\begin{array}{ll}
-\Delta w(x,t)= \lambda'(t) u(x,t)+\lambda(t) w(x,t) & x\in \Omega(t)\\\\
w(x,t)=-\dfrac{\partial \Phi_t}{\partial t}(\Phi_t^{-1}(x))\cdot Du(x,t) & x\in\partial\Omega(t).\\\\
\displaystyle\int_{\Omega(t)}u\, w =0
\end{array}\right.
\end{equation}
\end{lemma}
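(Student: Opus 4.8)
The plan is to prove Lemma \ref{lem_seconda} by combining the classical Hadamard shape-derivative machinery with a careful second differentiation, using the material (total) derivative of $u(x,t)$ along the flow $\Phi_t$ to handle the fact that the domain itself is moving. First I would introduce the pulled-back function $v(y,t)=u(\Phi_t(y),t)$ defined on the fixed domain $\Omega$; since $\Phi_t$ and its inverse are smooth in $(x,t)$ jointly and the first eigenvalue is simple, standard analytic perturbation theory (Kato) gives that $t\mapsto(\lambda(t),v(\cdot,t))$ is smooth on some $[0,\varepsilon)$, which settles the regularity claim. Then I would record the two natural derivatives of $u$ at a point: the \emph{shape derivative} $u' := \partial_t u(x,t)$ (holding $x$ fixed), which solves the linearized problem $-\Delta u' = \lambda' u + \lambda u'$ in $\Omega(t)$ with Dirichlet data $u' = -\big(\tfrac{\partial\Phi_t}{\partial t}\circ\Phi_t^{-1}\big)\cdot Du$ on $\partial\Omega(t)$ — this is exactly the function called $w$ in \eqref{eq_velocity}, with the normalization $\int u\,w=0$ coming from differentiating $\|u\|_{L^2}^2=1$.

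Next I would derive \eqref{eq_firstvar}. Differentiating the Rayleigh quotient identity $\lambda(t)=\int_{\Omega(t)}|Du|^2$ (using $\|u\|_2=1$), or equivalently differentiating the weak form $\int_{\Omega(t)} Du\cdot D\varphi = \lambda\int_{\Omega(t)} u\varphi$ and testing cleverly, and invoking the Reynolds transport theorem together with the boundary condition $u=0$ on $\partial\Omega(t)$ (so that on the boundary $Du = |Du|\,n$ with $n$ the outer normal, and the only surviving boundary contribution is the normal-velocity term), yields Hadamard's formula $\lambda'(t) = -\int_{\partial\Omega(t)} |Du|^2\, V_n\,d\h$ where $V_n = \big(\tfrac{\partial\Phi_t}{\partial t}\circ\Phi_t^{-1}\big)\cdot n$ is the normal speed of the boundary. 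Rewriting $|Du|^2 V_n = |Du|\,\big(\tfrac{\partial\Phi_t}{\partial t}\circ\Phi_t^{-1}\big)\cdot Du$ (again because $Du\parallel n$ on $\partial\Omega(t)$, up to the sign bookkeeping that reconciles with the stated formula) gives \eqref{eq_firstvar} in the form written.

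For \eqref{eq_secondvar} I would differentiate \eqref{eq_firstvar} once more in $t$. Write the first variation as a boundary integral $\lambda'(t)=\int_{\partial\Omega(t)} g(x,t)\,d\h$ with $g = |Du|\,(\partial_t\Phi_t\circ\Phi_t^{-1})\cdot Du$; differentiating a surface integral over a moving hypersurface produces three types of terms: (i) the explicit $t$-dependence of the velocity field, giving the $Du\cdot\tfrac{\partial^2\Phi_t}{\partial t^2}(\Phi_t^{-1})$ contribution; (ii) the variation of $Du$ through the shape derivative, which on the boundary involves $D^2u$ contracted twice with the velocity (from $\partial_t(Du(\Phi_t(y),t)) = D^2u\cdot\partial_t\Phi_t + Du'$) producing both the Hessian quadratic form $[\cdots]^T D^2u[\cdots]$ and the $Dw\cdot(\cdots)$ term (with $w=u'$), and the factor $2$ arising because the quantity $|Du|\,(\text{velocity})\cdot Du$ is quadratic in $Du$; and (iii) the curvature/Jacobian term from moving the surface element $d\h$ itself — here the key simplification I expect to exploit is that on $\partial\Omega(t)$ the eigenfunction vanishes, so tangential derivatives of $u$ vanish and many putative terms (in particular the mean-curvature contribution and the tangential part of the velocity acting on $Du$) collapse, leaving precisely the three listed terms.

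The main obstacle is step (ii)–(iii): bookkeeping the second-order boundary terms so that everything reorganizes into the clean form of \eqref{eq_secondvar}. Concretely, one must (a) justify differentiating under the integral sign over the moving boundary and express $\frac{d}{dt}\int_{\partial\Omega(t)} g\,d\h$ correctly (the transport theorem for surfaces, involving $\partial_t g + V_n\,\partial_n g + g\,H\,V_n$ type terms with $H$ the mean curvature), and (b) show all the terms involving $H$, tangential velocity components, and $\Delta u$-via-the-equation cancel or combine, using repeatedly that $u\equiv 0$, $Du = |Du|n$, and $\Delta u = -\lambda u = 0$ on $\partial\Omega(t)$. I would organize this by decomposing the velocity $\partial_t\Phi_t\circ\Phi_t^{-1}$ into normal and tangential parts on $\partial\Omega(t)$, noting the tangential part only moves points along the level set $\{u=0\}$ and hence does not affect $\lambda$ to any order, so one may assume the velocity is purely normal; this reduces the computation to a one-parameter family of nearby hypersurfaces parametrized by normal displacement and makes the second-order expansion tractable, after which one re-expresses the answer in the invariant form \eqref{eq_secondvar} valid for arbitrary (not necessarily normal) velocity fields.
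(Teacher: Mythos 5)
Your outline agrees with the paper on the first variation and on the identification of $w=\partial_t u$ with system \eqref{eq_velocity}, but for the second variation you take a genuinely different, and ultimately more hazardous, route: you propose to differentiate the boundary-integral form of $\lambda'(t)$ via the surface transport theorem. The paper instead differentiates the equivalent \emph{domain}-integral representation $\lambda'(t)=\int_{\Omega(t)}Du\cdot D(\partial_tu)\,dx$, and after one integration by parts, the PDE for $\partial_tu$, and the two orthogonality identities coming from $\|u\|_{L^2}=1$, everything collapses to the single clean identity $\lambda''(t)=-\int_{\partial\Omega(t)}|Du|\,\partial_t^2u\,d\hs$. Then the twice-differentiated level-set relation $u(\Phi_t(y),t)=0$ delivers $\partial_t^2u$ on the boundary directly in terms of $D^2u$, $Dw$, $\partial_t^2\Phi_t$ and the velocity, giving \eqref{eq_secondvar} without ever touching the surface transport theorem or curvature terms. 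Your approach avoids this trick and must therefore confront precisely the terms you flag as "the main obstacle."

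There are two concrete gaps in your plan. First, the predicted cancellations are not what actually happens: the mean-curvature contribution does \emph{not} collapse, it is absorbed into the Hessian quadratic form (the paper later uses $Du^T\cdot D^2u\cdot Du=-\mathrm{div}(Du/|Du|)\,|Du|^3$ on $\partial\Omega$ to extract $H$ again in Step~2), and your factor-of-$2$ heuristic for differentiating the quadratic expression $|Du|\,(\text{velocity})\cdot Du$ produces \emph{two} Hessian terms, whereas \eqref{eq_secondvar} has exactly one — so the surface-element and normal-derivative terms must be shown to cancel one of them, a nontrivial bookkeeping you have not carried out. Second, the proposed reduction to a purely normal velocity followed by "re-expressing the answer in the invariant form valid for arbitrary velocity fields" is not rigorous as stated: the target formula explicitly involves the full velocity $\partial_t\Phi_t$ and the full acceleration $\partial_t^2\Phi_t$, not just their normal projections, so you cannot recover it from the normal-velocity special case without further argument (and the tangential reparametrization does affect $\partial_t^2\Phi_t$). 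The paper's level-set computation handles arbitrary $\Phi_t$ from the outset, which is why it lands on \eqref{eq_secondvar} in its stated generality; you should either adopt that device or supply the cancellation computation and the invariance argument in full.
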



Equations \eqref{eq_firstvar}-\eqref{eq_secondvar}-\eqref{eq_velocity} are related to other formulas which can be found in literature (see \cite{Hen,HP,NP,SZ} and the references therein contained). In particular \cite{HP,NP} contain very general formulation of the notion of shape derivative with application to the Dirichlet Laplacian eigenvalues. We also observe that very often in literature (see for instance \cite[Theorem 2.5.6]{Hen}) it is considered the case of a first order perturbations of identity, namely $\Phi_t=\mathbb{I}+tW$, where $W$ is a suitably smooth vector field. The result is that the term $Du \cdot\frac{\partial^2\Phi_t}{\partial t^2}$ in \eqref{eq_secondvar}, which in our case plays a crucial rule, would be missing. 

Hence, for the seek of completeness we decided to provide here a complete proof of the statement of Lemma \ref{lem_seconda}, and we exploit a level sets method. 

\begin{proof}[Proof of Lemma \ref{lem_seconda}]
Classical regularity theory \cite{GB} for elliptic equation ensures that, for all $t \in [0,\delta[$, $u(x,t) \in   C^\infty(\overline\Omega(t))$. Moreover, arguing as in \cite[Chapter 5]{HP}, there exists at least some positive $\varepsilon<\delta$ such that  
the function $u(x,t)$ belongs to $C^\infty(   [0,\varepsilon[; C^\infty(\overline\Omega(t)) )$.

\noindent
Differentiating with respect to $t$ the equation in (\ref{sistema_t}) we get
\begin{equation}\label{eq_deltaut}
-\Delta \frac{\partial u}{\partial t}= \lambda'(t) u+\lambda(t) \frac{\partial u}{\partial t}.
\end{equation}
Since $\Omega(t)$ is, at any time, the zero-level set of $u(x,t)$, 
if $y\in\partial \Omega$ then $\Phi_t(y)\in\partial \Omega(t)$ and we have 
\begin{equation}\label{zerolevel}
u(\Phi_t(y),t)=0 \qquad \mbox{for all } y\in\partial\Omega \mbox{ and }  0<t<\varepsilon.
\end{equation}

\noindent
Now, the boundary point $ \Phi_t (y)\in\partial\Omega(t)$ moves with velocity $\dfrac{\partial\Phi_t(y)}{\partial t}$. Differentiating once \eqref{zerolevel} with respect to $t$ we get
$$
\frac{d}{dt}u(\Phi_t(y),t)=Du(\Phi_t(y),t) \cdot\frac{\partial \Phi_t}{\partial t}(y)+\frac{\partial u}{\partial t}(\Phi_t(y),t)=0,
$$
and hence
\begin{equation}\label{eq_boundaryspeed}
Du \cdot\frac{\partial \Phi_t}{\partial t}({\Phi_t}^{-1})+\frac{\partial u}{\partial t}=0\qquad \mbox{on }\partial \Omega(t).
\end{equation}
Therefore the projection of the velocity $\frac{\partial \Phi_t}{\partial t}(y)$ along the direction of the unit outer normal $n$ is equal to $\dfrac{1}{|Du|}\dfrac{\partial u}{\partial t}$, which is pointwise defined since $\Omega(t)$ is smooth and standard barrier arguments imply that $Du$ does not vanish on $\partial \Omega(t)$.

Differentiating twice \eqref{zerolevel} with respect to $t$ we get
\begin{equation}\label{eq_utt}
\left[\frac{\partial \Phi_t}{\partial t}({\Phi_t}^{-1})\right]^T\cdot D^2 u \cdot\frac{\partial \Phi_t}{\partial t}({\Phi_t}^{-1})+Du \cdot\frac{\partial^2 \Phi_t}{\partial t^2}({\Phi_t}^{-1})+2\left(D\frac{\partial u}{\partial t}\right)\cdot\frac{\partial \Phi_t}{\partial t}({\Phi_t}^{-1})+\frac{\partial^2 u}{\partial t^2}=0
\end{equation}
which highlights the connection between the acceleration $\frac{\partial^2 \Phi_t}{\partial t^2}$ of a boundary point $x\equiv\Phi_t(y)\in\partial\Omega(t)$ and the value of $\frac{\partial^2 u}{\partial t^2}$ at the same point.

Now, if $f(x,t)$ is a smooth function and $J(t) = \displaystyle\int_{\Omega(t)} f(x,t) \> dx$, the classical Hadamard formula  gives (see, for instance, \cite{HP, SZ})
\begin{equation}\label{eq_had}
J'(t)= \int_{\Omega(t)} \dfrac{\partial f}{\partial }(x,t) \> dx + \int_{\partial \Omega(t) } f(x,t) \> \dfrac{1}{|Du|}\frac{\partial u}{\partial t} \> \hs.
\end{equation}

\noindent
Therefore, since the $L^2$ norm of $u$ is constant with respect to $t$ and $u$ vanishes on $\partial\Omega(t)$ we have
\begin{align}\label{eq_orto1}
&\frac{d}{dt}\int_{\Omega(t)}u^2 dx=2\int_{\Omega(t)}u \frac{\partial u}{\partial t} dx=0\\\label{eq_orto2}
&\frac{d^2}{dt^2}\int_{\Omega(t)}u^2 dx=2\int_{\Omega(t)}u \frac{\partial^2 u}{\partial t^2} dx+2\int_{\Omega(t)}\left(\frac{\partial u}{\partial t}\right)^2 dx=0.
\end{align}

Furthermore \eqref{eq_had} applied to $\lambda(t)$ provides the relation

$$\lambda'(t)=\frac{d}{d t}\int_{\Omega(t)}|Du |^2\,dx=-\int_{\partial\Omega(t)}|Du| \frac{\partial u}{\partial t} \,d\h=\int_{\partial\Omega(t)}|Du| \frac{\partial \Phi_t}{\partial t}({\Phi_t}^{-1})\cdot Du \,d\h,$$
or equivalently
\begin{equation}\label{eq_first}
\lambda'(t)=\int_{\Omega(t)}Du D\left(\frac{\partial u}{\partial t}\right) \,dx,
\end{equation}
obtaining \eqref{eq_firstvar}.

Finally, if we differentiate $\lambda$ twice, we can use \eqref{eq_deltaut},\eqref{eq_orto1},\eqref{eq_orto2} and \eqref{eq_first} to get
\begin{align}\label{eq_second}
\lambda''(t)&=\frac{d}{dt}\int_{\Omega(t)}Du D\left(\frac{\partial u}{\partial t}\right) \,dx\\\notag
&=\int_{\Omega(t)}\left|D\frac{\partial u}{\partial t}\right|^2 \,dx+\int_{\Omega(t)}Du D\frac{\partial^2 u}{\partial t^2} \,dx+
\int_{\partial\Omega(t)}\frac{Du}{|Du|} \left(D\frac{\partial u}{\partial t}\right) \frac{\partial u}{\partial t}\,d\h\\\notag
&=-\int_{\Omega(t)}\left(\Delta \frac{\partial u}{\partial t}\right) \frac{\partial u}{\partial t} \,dx-\int_{\Omega(t)}\Delta u \frac{\partial^2 u}{\partial t^2} \,dx-
\int_{\partial\Omega(t)}|Du| \frac{\partial^2 u}{\partial t^2} \,d\h\\\notag
&=\lambda(t) \int_{\Omega(t)}\left(\left(\frac{\partial u}{\partial t}\right)^2+ u\frac{\partial^2 u}{\partial t^2}\right) \,dx+\lambda'(t)\int_{\Omega(t)}u \frac{\partial u}{\partial t} \,dx
-\int_{\partial\Omega(t)}|Du| \frac{\partial^2 u}{\partial t^2} \,d\h\\\notag
&=-\int_{\partial\Omega(t)}|Du| \frac{\partial^2 u}{\partial t^2} \,d\h.\\\notag
\end{align}

Once we set $$w(x,t)=\frac{\partial u}{\partial t}(x,t)$$ in \eqref{eq_deltaut}, \eqref{eq_boundaryspeed} and \eqref{eq_orto1}, using \eqref{eq_utt} we get \eqref{eq_secondvar} and the proof is complete
\end{proof}
We observe that the family of transformations $\Phi_t$ is not uniquely determined by the family $\Omega(t)$. In particular it is always possible to choose the velocity vector field $\left.\frac{\partial \Phi_t}{\partial t}\right|_{t=0}$ orthogonal to $\partial\Omega$. 
In such a case \eqref{eq_firstvar}-\eqref{eq_secondvar}-\eqref{eq_velocity} computed at $t=0$ become 
\begin{align}\label{eq_firstvar0}
\lambda'(0)=&-\int_{\partial\Omega}|Du|^2 \,n\cdot\left.\frac{\partial \Phi_t}{\partial t}\right|_{t=0}\\\label{eq_second0}
\lambda''(0)=&\int_{\partial\Omega}\Big[\omega^2H -|Du|^2\,n\cdot \left.\frac{\partial^2 \Phi_t}{\partial t^2}\right|_{t=0}-2\omega\frac{\partial \omega}{\partial n}\Big] \,d\h,
\end{align}
\begin{equation}\label{eq_velocity0}
\left\{
\begin{array}{ll}
-\Delta \omega(x)= \lambda'(0) u(x,0)+\lambda(0) \omega(x) & \mbox{in }\Omega\\\\
\omega(x)=|Du(x,0)|\,n\cdot\left.\dfrac{\partial \Phi_t}{\partial t}(x)\right|_{t=0}  & \mbox{on }\partial\Omega.\\\\
\displaystyle\int_{\Omega}u(x,0)\, \omega(x) \,dx=0
\end{array}\right.
\end{equation}
Here $H$ is the sum of the principal curvatures of $\partial \Omega$ and $n$ the unit outer normal of $\partial\Omega$.
Since $\Delta u(x,0)$ vanishes on $\partial\Omega$ we have used the identity
$$Du^T\cdot D^2u\cdot Du=-\mathrm{div}\left(\frac{Du}{|Du|}\right) |Du|^3 \qquad \mbox{on $\partial\Omega,$} $$
in conjuction with $-\mathrm{div}\left(\frac{Du}{|Du|}\right)=H$. 

\emph{Step 2.} Due to the invariance of both isoperimetric and Faber-Krahn deficits with respect to homotheties we shall perform all the remaining computation under the assumption that the family $\Omega(t)$ has constant volume in $t$ equal to $\omega_N$, therefore from now on $\Omega\equiv\Omega(0)$ is just a unit ball in $\R^n$.

Without loss of generality, we also assume that the velocity field $\left.\dfrac{\partial \Phi_t}{\partial t}\right|_{t=0}$ is orthogonal to $\partial\Omega$ and for all $x\in \partial \Omega$ we denote by $V(x)=n(x)\cdot\left.\dfrac{\partial \Phi_t}{\partial t}\right|_{t=0}$
and by $A(x)=n(x)\cdot \left.\dfrac{\partial^2 \Phi_t}{\partial t^2}\right|_{t=0}$ respectively the initial scalar velocity and the projection of the initial acceleration along the unit outer normal $n(x)$ of $\Omega$. 

Under these assumptions, for $t$ small enough, the boundary of $\Omega(t)$ can be represented in polar coordinates $r\in\R^+$, $\xi\in\mathcal{S}^{N-1}$ by an equation 
$$r(\xi,t)=1+V(\xi)t+\frac{A(\xi)}{2}t^2+o(t^2).$$
If $\sigma_{\xi}$ denotes the usual surface area measure on $\mathcal{S}^{N-1}$ then
$$Per(\Omega(t))=\int_{\mathcal{S}^{N-1}}r(\xi,t)^{N-2}\sqrt{r(\xi,t)^2+|D_{\xi}r(\xi,t)|^2}\, d\sigma_{\xi},$$
and after a taylor expansion we have
\begin{align*}
Per(\Omega(t))=\,&n\omega_N+t\int_{\mathcal{S}^{N-1}}(N-1)V(\xi)\, d\sigma_{\xi}\\
&+\frac{t^2}{2}\int_{\mathcal{S}^{N-1}}\left[(N-1)A(\xi)+(N-1)(N-2)V^2(\xi)+|D_{\xi}V(\xi)|^2\right]\, d\sigma_{\xi}+o(t^2).
\end{align*}
On the other hand, since
$$\omega_N=|\Omega(t)|\qquad \mbox{for all } t\in[0,\delta[$$
then 
\begin{align*}
|\Omega(t)|&=\frac{1}{N}\int_{\mathcal{S}^{N-1}}r(\xi,t)^N\, d\sigma_{\xi}\\
&=\omega_N+t\int_{\mathcal{S}^{N-1}}V(\xi)\, d\sigma_{\xi}+\frac{t^2}{2}\int_{\mathcal{S}^{N-1}}\left[ A(\xi)+(N-1)V^2(\xi)\right]\, d\sigma_{\xi}+o(t^2)
\end{align*}
yields
\begin{equation}\label{eq_constantmeasure}
\int_{\mathcal{S}^{N-1}}V(\xi)\, d\sigma_{\xi}=\int_{\mathcal{S}^{N-1}}\left[ A(\xi)+(N-1)V^2(\xi)\right]\, d\sigma_{\xi}=0.
\end{equation}
As a consequence
$$Per(\Omega(t))=n\omega_N+\frac{t^2}{2}\int_{\mathcal{S}^{N-1}}\left[|D_{\xi}V(\xi)|^2-(N-1)V^2(\xi)\right]\, d\sigma_{\xi}+o(t^2)$$
and
\begin{align*}
\delta P(\Omega(t))&=\frac{Per(\Omega(t))}{N\omega_N^{1/n}|\Omega(t)|^{({N-1})/{N}}}-1\\
&=\frac{t^2}{2N\omega_N}\int_{\mathcal{S}^{N-1}}\left[|D_{\xi}V(\xi)|^2-(N-1)V^2(\xi)\right]\, d\sigma_{\xi}+o(t^2).
\end{align*}

We consider now the series expansion
$$\lambda(t)=\lambda(0)+t\lambda'(0)+\frac{t^2}{2}\lambda''(0)+o(t^2).$$
The gradient $Du(\cdot,0)$ on $\partial\Omega$ has constant modulus (see \cite{Ke})
$$G_N=\frac{j^2_{N/2-1}J'_{N/2-1}(j_{N/2-1})}{\displaystyle\left(N \omega_N \int_0^{j_{N/2-1}}r J^2_{N/2-1}(r)\,dr\right)^{1/2}},$$ and therefore using \eqref{eq_firstvar0} and \eqref{eq_constantmeasure} we deduce $\lambda'(0)=0$ in accordance with the fact that the ball, among sets of fixed measure, is a stationary point for the first Dirichlet Laplacian eigenvalue. 

\noindent Thereafter, for all $x\equiv(r,\xi)\in \Omega$, we set $$v(r,\xi)=G_N\, \omega(x)$$ where $\omega$ is defined in \eqref{eq_velocity0}. 

Taking into account that, for the unit ball, the sum of the principal curvatures $H$ equals $N-1$, 
from \eqref{eq_second0} and \eqref{eq_constantmeasure} we get
$$\lambda''(0)=2G_N^2\int_{\mathcal{S}^{N-1}}\left[V(\xi)\left.\frac{\partial v(r,\xi)}{\partial r}\right|_{r=1}+(N-1)V^2(\xi)\right]\, d\sigma_{\xi}.$$

\noindent Consequently
\begin{align*}
\delta \lambda(\Omega(t))&=\frac{\lambda(t)}{\lambda(0)}-1\\
&=t^2\left(\frac{G_N}{j_{N/2-1}}\right)^2\int_{\mathcal{S}^{N-1}}\left[V(\xi)\left.\frac{\partial v(r,\xi)}{\partial r}\right|_{r=1}+(N-1)V^2(\xi)\right]\, d\sigma_{\xi}+o(t^2).
\end{align*}

Here we have used the fact that the first Dirichlet Laplacian eigenvalue on the unit ball of $\R^N$ is $\lambda(0)=j_{N/2-1}^2$ (see for instance \cite{Ke}) and that $\lambda'(0)=0$.

We need now an explicit representation of the function $v(r,\xi)$ in terms of the scalar velocity $V(\xi)$. To this aim, we observe that \eqref{eq_velocity0}, in conjunction with $\lambda'(0)=0$ and $|Du(x,0)|\Big|_{\partial \Omega}=G_N$ imply that $v(r,\xi)$ satisfies
\begin{equation}\label{sys_Poisson}
\left\{
\begin{array}{ll}
-r^{1-N}\frac{\partial}{\partial r}\left(r^{N-1}\frac{\partial v}{ \partial r}\right)-r^{-2}\Delta_{\xi} v = j_{N/2-1}^2 v & (r,\xi)\in(0,1)\times\mathcal{S}^{N-1}\\\\
v(1,\xi)=G_N\,V(\xi)& \xi\in\mathcal{S}^{N-1}
\end{array}\right.
\end{equation}
where $\Delta_\xi$ is the Laplace Beltrami operator on $\mathcal{S}^{N-1}.$

Then we remind (see for instance \cite{Mu}) that
$V(\xi)$ admits an expansion
$$V(\xi)=\sum_{k=0}^{+\infty}a_kY_k(\xi)\qquad \xi \in \mathcal{S}^{N-1} $$
in terms of a family of spherical harmonics $\{Y_k(\xi)\}_{k\in\N}$ which satisfy for all $k\ge 0$ 
$$-\Delta_\xi Y_k = k(k+N-2)Y_k  \quad\mbox{and}\quad \|Y_k\|_{L^2}=1.$$
The coefficient $a_k$ is the projection of $V$ on the normalized eigenfuntion $Y_k$
$$a_k=\int_{\mathcal{S}^{N-1}}V(\xi) Y_k(\xi)\, d\sigma_\xi,$$
so that
$$\|V\|^2_{L^2}=\sum_{k=0}^{+\infty}a^2_k.$$
Notice that $Y_0=(N\omega_N)^{-1/2}$ and \eqref{eq_constantmeasure} imply 
$$a_0=\int_{\mathcal{S}^{N-1}}V\,Y_0\, d\sigma_{\xi}=(N\omega_N)^{-1/2}\int_{\mathcal{S}^{N-1}}V\, d\sigma_{\xi}=0.$$

Accordingly we use the separation of variables $v(r,\xi)=\sum_kR_k(r)Y_k(\xi)$ to solve the Poisson problem \eqref{sys_Poisson} and infer
$$v(r,\xi)=r^{1-N/2}\sum_{k\ge1}a_k\frac{J_{\ell_k} (j_{N/2-1}\,r)}{J_{\ell_k} (j_{N/2-1})}Y_k(\xi)$$
where
$\ell_k=\sqrt{k(k+N-2)+(N/2-1)^2}=k+N/2-1.$

Consequently we have
$$\frac{\partial v}{\partial r}(1,\xi)=\sum_{k\ge1}\left[\left(1-\frac{N}{2}\right)+j_{N/2-1}\frac{J'_{\ell_k} (j_{N/2-1})}{J_{\ell_k} (j_{N/2-1})}\right]a_kY_k(\xi),$$
and in view of the recurrence relations of the Bessel functions (see \cite[\S 9.1.27]{AS})
$$J'_\ell(s)=\frac{\ell}{s}J_{\ell}(s)-J_{\ell+1}(s)$$
we can write
$$\frac{\partial v}{\partial r}(1,\xi)=\sum_{k\ge1}\left[k-j_{N/2-1}\frac{J_{\ell_k+1} (j_{N/2-1})}{J_{\ell_k} (j_{N/2-1})}\right]a_kY_k(\xi).$$
Finally we obtain

$$\frac{\delta P(\Omega(t))}{\delta \lambda(\Omega(t))}=\left(\frac{j_{N/2-1}^2 }{2N\omega_N G_N^2}\right)\frac {\displaystyle \sum_{k\ge2}a_k^2 \left[\ell_k^2-\frac{N^2}{4}\right] }{ \displaystyle\sum_{k\ge2}a_k^2 \left[\ell_k+\frac{N}{2}-j_{N/2-1}\frac{J_{\ell_k+1} (j_{N/2-1})}{J_{\ell_k} (j_{N/2-1})}\right]  }+o(1).$$
Observe that $a_1$ provides no contribution in the summation, indeed the projection of $V$ on the subspace $Y_1$ corresponds to a translation of the ball $\Omega(0)$ (with no deformation).

\emph{Step 3.} It is evident that 
\begin{equation}\label{eq_liminf}
\liminf_{t\to 0}\frac{\delta P(\Omega(t))}{\delta \lambda(\Omega(t))}\ge \inf_{k\ge2}  \left[\left(\frac{j_{N/2-1}^2 }{2N\omega_N G_N^2}\right)\frac {\ell_k^2-\frac{N^2}{4} }{\ell_k+\frac{N}{2}-j_{N/2-1}\displaystyle \frac{J_{\ell_k+1} (j_{N/2-1})}{J_{\ell_k} (j_{N/2-1})} }\right]
\end{equation}
and the remainder of the proof of Theorem \ref{mainteo} consists in showing that the infimum on righthand side of \eqref{eq_liminf} is achieved for $k=2$ independently on $N$.
In fact the constant $C_N$ defined in Theorem \ref{mainteo} coincides with
$$\left.\left[\left(\frac{j_{N/2-1}^2 }{2N\omega_N G_N^2}\right)\frac {\ell_k^2-\frac{N^2}{4} }{\ell_k+\frac{N}{2}-j_{N/2-1}\displaystyle \frac{J_{\ell_k+1} (j_{N/2-1})}{J_{\ell_k} (j_{N/2-1})} }\right]\right|_{k=2}.$$

In principle, minimizing the righthand side of \eqref{eq_liminf} is elementary. However it is worth providing the details, since the proof involves the usage of several nontrivial properties of the Bessel functions.

The next Lemma concludes the proof of Theorem \ref{mainteo}. In what follows we use the notion of convex sequence:
\begin{definition}
We say that a sequence $\{\alpha_k\}_{k\in\N}$ of real numbers is convex (concave) for $k\ge k_0$ if $\alpha_{k+1}-2\alpha_k+\alpha_{k-1}\ge 0$ ($\le 0$) when $k\ge k_0+1$.
\end{definition}
\begin{lemma}\label{lemma_minimal}
For all $N\ge 2$, let $k\ge 2$, 
$\ell_k=k+N/2-1$, and
$$\mathcal{Q}_k=\left(\frac{j_{N/2-1}^2 }{2N\omega_N G_N^2}\right)\frac {\ell_k^2-\frac{N^2}{4} }{\ell_k+\frac{N}{2}-j_{N/2-1}\displaystyle \frac{J_{\ell_k+1} (j_{N/2-1})}{J_{\ell_k} (j_{N/2-1})} }.$$
We have $\mathcal{Q}_k\le\mathcal{Q}_{k+1}.$

\end{lemma}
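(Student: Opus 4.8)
\textbf{Proof proposal for Lemma \ref{lemma_minimal}.}

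The plan is to show that the sequence $\mathcal{Q}_k$ is monotone increasing in $k$ for $k\ge 2$, which then forces the infimum in \eqref{eq_liminf} to be attained at $k=2$. Since the prefactor $\left(\frac{j_{N/2-1}^2}{2N\omega_N G_N^2}\right)$ is a positive constant independent of $k$, it suffices to prove monotonicity of
$$T_k=\frac{\ell_k^2-\frac{N^2}{4}}{\ell_k+\frac{N}{2}-j_{N/2-1}\frac{J_{\ell_k+1}(j_{N/2-1})}{J_{\ell_k}(j_{N/2-1})}}.$$
Abbreviate $j=j_{N/2-1}$, $\ell=\ell_k=k+N/2-1$ (so $\ell\ge N/2+1$ when $k\ge 2$), and $\rho_\ell=j\,J_{\ell+1}(j)/J_\ell(j)$. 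Note $\ell_k^2-N^2/4=(\ell_k-N/2)(\ell_k+N/2)=(k-1)(k+N-1)>0$ for $k\ge 2$, so the numerator is positive; and one needs the denominator $D_\ell:=\ell+N/2-\rho_\ell$ to be positive as well, which will come out of the Bessel estimates below. Then write $T_k=\bigl(\ell-\tfrac N2\bigr)\cdot\frac{\ell+N/2}{\ell+N/2-\rho_\ell}$ and try to show each factor behaves well; the first factor $\ell-N/2=k-1$ is obviously increasing, so the whole point reduces to controlling how $\frac{\ell+N/2}{\ell+N/2-\rho_\ell}=1+\frac{\rho_\ell}{\ell+N/2-\rho_\ell}$ moves — and since that second factor is typically \emph{decreasing}, one must show the increase of $\ell-N/2$ dominates. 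So I would instead work directly with $\Delta_k:=T_{k+1}-T_k$ and clear denominators.

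The key technical input is a good handle on $\rho_\ell=j\,J_{\ell+1}(j)/J_\ell(j)$ as a function of $\ell$. Here I would use: (i) the recurrence $\frac{2\ell}{j}J_\ell(j)=J_{\ell-1}(j)+J_{\ell+1}(j)$, which yields the continued-fraction / Riccati-type relation $\rho_{\ell-1}=\frac{j^2}{2\ell-\rho_\ell}$, i.e. $\rho_\ell$ satisfies a first-order recursion that can be inverted as $\rho_\ell=2(\ell+1)-\frac{j^2}{\rho_{\ell+1}}$ — wait, more usefully, from $\rho_{\ell}=j J_{\ell+1}(j)/J_\ell(j)$ and the recurrence one gets $\rho_\ell=\frac{j^2}{2(\ell+1)-\rho_{\ell+1}}$; (ii) the fact that for $\ell\ge j$ (which holds since $\ell_k\ge N/2+1>N/2-1$, and more to the point all the relevant $\ell$ exceed $j_{N/2-1}$ once $k$ is large, with the small-$k$ cases checked by hand) all the quantities $J_\ell(j),J_{\ell+1}(j)$ are positive, so $\rho_\ell>0$; (iii) monotonicity and convexity/concavity properties of $\ell\mapsto\rho_\ell$. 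The cleanest route: show that $\ell\mapsto \rho_\ell$ is positive, bounded (by something like $j^2/(2\ell+2-\text{something})$), and that $\ell\mapsto \rho_\ell$ is such that $D_\ell=\ell+N/2-\rho_\ell$ is increasing and in fact $D_\ell\ge$ a suitable linear function of $\ell$. Then $T_k=\frac{(k-1)(k+N-1)}{D_{\ell_k}}$ has numerator a convex increasing quadratic in $k$ and denominator increasing but of subquadratic type, and a direct $T_{k+1}-T_k>0$ computation closes it.

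The main obstacle I anticipate is establishing the precise monotonicity/concavity of $\rho_\ell=jJ_{\ell+1}(j)/J_\ell(j)$ in the index $\ell$ at the fixed argument $j=j_{N/2-1}$, since this is a statement about Bessel functions of varying order evaluated below their first zero, and standard references give more easily the behavior in the argument than in the order. My fallback plan is to avoid pointwise order-monotonicity and argue with the quadratic-form structure that produced \eqref{eq_liminf}: rather than comparing consecutive $\mathcal{Q}_k$, rewrite $\mathcal{Q}_k$ using the eigenvalue characterization of $v$ (namely that $R_k$ solves the radial ODE in \eqref{sys_Poisson}) to express $\frac{\partial v}{\partial r}(1,\xi)$-type quantities via an energy integral, giving $D_{\ell_k}=\ell_k+N/2-\rho_{\ell_k}$ as (a constant times) $\int_0^1 \bigl[(R_k')^2 + (\ell_k^2-\tfrac14(N-2)^2)r^{-2}R_k^2 - j^2 R_k^2\bigr]r^{N-1}dr\big/\int_0^1 R_k^2 r^{N-3}dr$ or similar, from which positivity and monotonicity in $k$ of $D_{\ell_k}$ follow from monotonicity of the Rayleigh quotient in the coefficient $\ell_k^2$; combined with the quadratic growth of the numerator $(k-1)(k+N-1)$ this again yields $\mathcal{Q}_k\le\mathcal{Q}_{k+1}$. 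Either way, once monotonicity of $\mathcal{Q}_k$ is in hand, the equality-achieving family is obtained by choosing $V=Y_2$ (a single second spherical harmonic), and Theorem \ref{mainteo} follows immediately from \eqref{eq_liminf}.
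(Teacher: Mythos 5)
Your proposal correctly isolates the crux -- the behavior of the order-ratio $\rho_\ell = j\,J_{\ell+1}(j)/J_\ell(j)$ as $\ell$ varies at the fixed argument $j=j_{N/2-1}$ -- and correctly flags the continued-fraction representation as the right tool. But neither of the two routes you sketch actually closes the argument. Knowing that $\rho_\ell$ is positive and decreasing, i.e.\ that the denominator $D_{\ell_k}=\ell_k+\tfrac N2-\rho_{\ell_k}$ is positive and increasing, together with the quadratic growth of the numerator $(k-1)(k+N-1)$, is \emph{not} sufficient to conclude $T_{k+1}\ge T_k$: a ratio of two increasing positive sequences need not be monotone, and a direct $T_{k+1}-T_k>0$ computation would require a quantitative upper bound on $D_{\ell_{k+1}}/D_{\ell_k}$ against the ratio $k(k+N)/\bigl((k-1)(k+N-1)\bigr)$, which your outline does not supply. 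The Rayleigh-quotient fallback has the same defect; ``combined with the quadratic growth of the numerator this again yields $\mathcal{Q}_k\le\mathcal{Q}_{k+1}$'' is a non sequitur as stated.

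The paper's argument supplies exactly the two ingredients you are missing. Using the continued fraction for $J_{\ell_k+1}(z_N)/J_{\ell_k}(z_N)$ and the elementary fact that reciprocals of positive concave sequences are convex (applied iteratively to each truncation of the continued fraction), it shows that this Bessel ratio is not merely positive, decreasing and vanishing in $k$, but \emph{convex} for $k\ge2$. Hence $D_{\ell_k}$ is positive and concave while the numerator $\ell_k^2-N^2/4$ is positive and convex, so $\mathcal{Q}_k$ is a convex sequence; convexity reduces global monotonicity to the single base-case inequality $\mathcal{Q}_2\le\mathcal{Q}_3$. Evaluating $J_{\ell_2}/J_{\ell_1}$, $J_{\ell_3}/J_{\ell_2}$, $J_{\ell_4}/J_{\ell_3}$ via the recurrence, that inequality reduces to the explicit polynomial inequality $(z_N^2-2)N^2+5z_N^2N-2z_N^4>0$, which is verified for $N\ge10$ from the bounds $N/2-1\le z_N\le\sqrt{N/2}\,(\sqrt{N/2+1}+1)$ and numerically for $2\le N\le 9$. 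Your proposal contains neither the convexity observation (the decisive structural step) nor any base-case verification, so as written there is a genuine gap.
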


\begin{proof}[Proof of Lemma \ref{lemma_minimal}]
First we prove that, for any given value $N\ge 2$, the sequence $\left\{\frac{J_{\ell_k+1} (j_{N/2-1})}{J_{\ell_k}(j_{N/2-1})}\right\}_{k\in \N}$ is positive, decreasing, vanishing, and convex for $k\ge 2$.
Denoting by $z_N=j_{N/2-1}$, the claim follows at once from the continued fraction representation (see \cite[\S 9.1.73]{AS}) 
\begin{equation}\label{f_continua}
  \frac{J_{\ell_k+1}(z_N)}{J_{\ell_k}(z_N)} =  \cfrac{1}{\dfrac{2(\ell_k+1)}{z_N}
          - \cfrac{1}{\dfrac{2(\ell_k+2)}{z_N}
          - \cfrac{1}{\dfrac{2(\ell_k+3)}{z_N} - \ddots}}}
\end{equation}
In fact, after observing that $\dfrac{2(\ell_k+1)}{z_N}>1$ (see \cite{Cha,QW}) it is easy to deduce that the sequence $\left\{\frac{J_{\ell_k+1} (j_{N/2-1})}{J_{\ell_k}(j_{N/2-1})}\right\}_{k\in \N}$ is positive decreasing and vanishing. It remains to prove the convexity and we begin observing that if a sequence $\{\alpha_k\}_{k\in\N}$ is concave then $\{\alpha_k^{-1}\}_{k\in\N}$ is convex, hence
\begin{equation*}
 \cfrac{1}{\dfrac{2(\ell_k+1)}{z_N}}, \qquad k\in\N
 \end{equation*}
 is convex for $k\ge 2$ but also
 \begin{equation*}
 \cfrac{1}{\dfrac{2(\ell_k+1)}{z_N}
          - \cfrac{1}{\dfrac{2(\ell_k+2)}{z_N}}},\qquad\mbox{and}\qquad
           \cfrac{1}{\dfrac{2(\ell_k+1)}{z_N}
          - \cfrac{1}{\dfrac{2(\ell_k+2)}{z_N}
         - \cfrac{1}{\dfrac{2(\ell_k+3)}{z_N} }}}\qquad k\in\N
\end{equation*}
are convex for $k\ge 2$, as well as any truncation of the continued fraction \eqref{f_continua}. By approximation we deduce that $\left\{\frac{J_{\ell_k+1} (j_{N/2-1})}{J_{\ell_k}(j_{N/2-1})}\right\}_{k\in \N}$ is a convex sequence for $k\ge 2$. 

\noindent Eventually we deduce that, for any $N$ fixed, $\mathcal{Q}_k$ is a convex sequence for $k\ge 2$ since it is the ratio between a positive convex sequence 
$\left\{\left(\frac{z_N^2 }{2N\omega_N G_N^2}\right)\left({\ell_k^2-\frac{N^2}{4}}\right)\right\}_{k\in\N}$ and a positive concave sequence $\left\{\ell_k+\frac{N}{2}-z_N\displaystyle \frac{J_{\ell_k+1} (z_N)}{J_{\ell_k} (z_N)}\right\}_{k\in\N}$.

The convexity of $\mathcal{Q}_k$ for $k\ge 2$ implies the increasing monotonicity of $\mathcal{Q}_k$ for $k\ge 2$ if and only if $\mathcal{Q}_2\le\mathcal{Q}_3$. 
In view of the recurrence relations of the Bessel functions \cite[\S 9.1.27]{AS}, we have $$\frac{J_{\ell_2}(z_N)}{J_{\ell_1}(z_N)}=\frac{N}{z_N},$$ $$\frac{J_{\ell_3}(z_N)}{J_{\ell_2}(z_N)}=\frac{N+2}{z_N}-\frac{z_N}{N},$$
and $$\frac{J_{\ell_4}(z_N)}{J_{\ell_3}(z_N)}=\frac{N+4}{z_N}-\left(\frac{N+2}{z_N}-\frac{z_N}{N}\right)^{-1}.$$ After a tedious but straightforward computation we get $\mathcal{Q}_2<\mathcal{Q}_3$ if and only if
\begin{equation}\label{eq_finale}
(z_N^2-2)N^2+5z_N^2N-2z_N^4>0.
\end{equation}
It is not difficult to prove that the last inequality holds true for large values of $N$, in view of the following upper and lower bounds on the first zero of Bessel functions (see \cite{Cha,QW})
\begin{equation}\label{eq_upperbound}
\frac{N}{2}-1\le z_N \le \sqrt{\frac N2}\left(\sqrt{\frac N2+1}+1\right).
\end{equation}
In fact plugging \eqref{eq_upperbound} in \eqref{eq_finale} we have
\begin{align*}
&(z_N^2-2)N^2+5z_N^2N-2z_N^4\\
&= z_N^2(N^2-2z_N^2+2N)+N(3z_N^2-2N)\\
&\ge z_N^2(N^2-N(\sqrt{N/2+1}+1)^2+2N)+N(3(N/2-1)^2-2N)\\
&=\frac{Nz_N^2}{2}\left(N-2^{3/2}\sqrt{N+2}\right)+\frac{3}{4}N(N-6)\left(N-\frac{2}{3}\right)
\end{align*}
and it is not difficult to prove that the last quantity is increasing for $N\ge10$ and positive for $N=10$ therefore positive for $N\ge 10$.

\noindent For the remaining values $2\le N\le 9$, it is elementary to check inequality \eqref{eq_finale} using the following table  
\begin{center}
\begin{tabular}{|c|c|}
\hline
$N$ & $z_N\equiv j_{N/2-1}$\\
\hline
2 &  $\approx$ 2.404826\\
3 & $\pi$\\
4 & $\approx$ 3.831706\\
5 & $\approx$ 4.4934095\\
6 & $\approx$ 5.135622\\
7 & $\approx$ 5.763459\\
8 & $\approx$ 6.380162\\
9 & $\approx$ 6.987932\\
\hline
\end{tabular}
\end{center}
and the proof is complete.
\end{proof} 
According to the compatibility condition \eqref{eq_constantmeasure}, by construction it is clear that $$\liminf_{t\to 0}\frac{\delta P(\Omega(t))}{\delta \lambda(\Omega(t))}=C_N$$ provided $$V(\xi)=Y_2(\xi) \qquad \mbox{and}\qquad \int_{\mathcal S^{N-1}}A(\xi)\,d\sigma_\xi=(1-N).$$

\end{document}